\newcommand{\field}[1]{\mathbb{#1}}
\newcommand{\N}{\field{N}}
\newcommand{\Z}{\field{Z}}
\newcommand{\R}{\field{R}}
\newcommand{\C}{\field{C}}
\renewcommand{\H}{\mathbb{H}}
\newcommand{\SL}{\operatorname{SL}}
\newcommand{\re}{\operatorname{Re}}
\newcommand{\bea}{\begin{eqnarray}}
\newcommand{\eea}{\end{eqnarray}}
\newcommand{\be}{\begin {equation}}
\newcommand{\ee}{\end{equation}}
\newcommand{\bpmod}[1]{\quad\left(\text{mod }#1\right)}
\numberwithin{equation}{section}
\newtheorem{theorem}{Theorem}
\numberwithin{theorem}{section}
\newtheorem{lemma}[theorem]{Lemma}
\newtheorem{corollary}[theorem]{Corollary}
\theoremstyle{remark}
\newtheorem*{remarks}{Remarks}
\theoremstyle{definition}
\newtheorem*{definition}{Definition}
\renewenvironment{proof}[1][Proof]{\begin{trivlist} \item[\hskip \labelsep {\bfseries #1:}]}{\qed\end{trivlist}}
\begin{document}	
\title[Regularized inner products and Hecke eigenforms]{Regularized inner products and weakly holomorphic Hecke eigenforms}
\author{Kathrin Bringmann}
\address{Mathematical Institute\\University of
	Cologne\\ Weyertal 86-90 \\ 50931 Cologne \\Germany}
\email{kbringma@math.uni-koeln.de}
\author{Ben Kane}
\address{Department of Mathematics\\ University of Hong Kong\\ Pokfulam, Hong Kong}
\email{bkane@hku.hk}
\date{\today}
\thanks{The research of the first author is supported by the Alfried Krupp Prize for Young University Teachers of the Krupp foundation and the research leading to these results receives funding from the European Research Council under the European Union's Seventh Framework Programme (FP/2007-2013) / ERC Grant agreement n. 335220 - AQSER.  The research of the second author was supported by grants from the Research Grants Council of the Hong Kong SAR, China (project numbers HKU 27300314, 17302515, and 17316416).}
\maketitle

\section{Introduction and statement of results}\label{sec:intro}
Hecke operators play a central role in the study of modular forms. The classical theory of newforms and modular $L$-functions relies on the fact that spaces of cusp forms are finite-dimensional and can be diagonalized with respect to the Hecke algebra. Although spaces of weakly holomorphic modular forms are not finite-dimensional, it turns out that Hecke operators still play an important role and there is a natural subspace whose elements can be considered as trivial. Then a natural Hecke structure can be described.
	
For $\kappa\in\Z$, denote by $M_{2\kappa}^!$ the space of weight $2\kappa$ weakly holomorphic modular forms (i.e., those meromorphic modular forms whose only possible pole occurs at $i\infty$) with respect to the full modular group $\SL_2(\Z)$.  Motivated by applications coming from the theory of $p$-adic modular forms, Guerzhoy \cite{Guerzhoy} observed that even though the space $M_{2k}^!$ ($k\in \N$) is infinite-dimensional and the Hecke operators $T_m$ (defined in \eqref{eqn:Tpdef}) increase the order of the pole, there is a meaningful Hecke theory on this space.  Emulating the usual definition of Hecke eigenforms (i.e., for every $m\in\N$ there exists $\lambda_m\in\C$ such that $f|T_m=\lambda_m f$), Guerzhoy defined a weakly holomorphic Hecke eigenform to be any $f\in M_{2k}^!\backslash\mathcal{J}$ ($\mathcal{J}$ is a specific subspace of $M_{2k}^!$ which is stable under the Hecke algebra) which satisfies
\[
f|T_{m} \equiv \lambda_m f\pmod{\mathcal{J}}.
\]
Here and throughout, for a vector space $\mathcal{S}$ and a subspace $\mathcal{J}\subset \mathcal{S}$, for $f,g\in \mathcal{S}$ the congruence $f\equiv g\pmod{\mathcal{J}}$ means that $f-g\in \mathcal{J}$.  In \cite{Guerzhoy}, Guerzhoy chose $\mathcal{J}=D^{2k-1}(M_{2-2k}^!)$, where $D:=\frac{1}{2\pi i} \frac{\partial}{\partial z}$. 
To understand Guerzhoy's motivation, recall that Bruinier, Ono, and Rhoades \cite{BOR} showed that $D^{2k-1}$ is an injective map from the  space of so-called harmonic Maass forms (non-holomorphic generalizations of modular forms) into the space $M_{2k}^!$ and determined its image. Guerzhoy realized that the image of the distinguished subspace $M_{2-2k}^!$ was a natural object to factor out by. However, it was later determined that $\mathcal{J}=D^{2k-1}(S_{2-2k}^!)$ is a better choice (cf. \cite{BGKO,Kent}), where $S_{2\kappa}^!$ is the space of \begin{it}weak cusp forms\end{it} of weight $2\kappa\in 2\Z$, i.e., those weakly holomorphic modular forms with vanishing constant term. Note that $D^{2k-1}$ and $T_m$ essentially commute (see \eqref{eqn:DHeckeEquiv}) and $S_{2-2k}^!$ is preserved by $T_m$, so $D^{2k-1}(S_{2-2k}^!)|T_m\subseteq D^{2k-1}\left(S_{2-2k}^!\right)$, and hence $D^{2k-1}(S_{2-2k}^!)$ is stable under the Hecke operators.  Thus a \begin{it}weakly holomorphic Hecke eigenform\end{it} is any $f\in M_{2k}^!\setminus D^{2k-1}(S_{2-2k}^!)$ for which there exist $\lambda_m\in \C$ satisfying (for all $m\in\N$) 
\begin{equation}\label{eqn:fTp}
f|T_m \equiv  \lambda_m f \bpmod{D^{2k-1}\!\left(S_{2-2k}^!\right)}.
\end{equation}
Functions $f$ satisfying \eqref{eqn:fTp} are not eigenfunctions of $T_m$ in the usual sense. However a reasonable interpretation is to consider $[f]:=f+D^{2k-1}(S_{2-2k}^!)$ as an element of the quotient space $M_{2k}^!/D^{2k-1}(S_{2-2k}^!)$. Functions $f$ satisfying (\ref{eqn:fTp}) may then be viewed as eigenvectors $[f]$ of the Hecke operators in that factor space. As is usual, we exclude $[0]=D^{2k-1}(S_{2-2k}^!)$ in the definition of eigenvector.

Viewed in this light, the definition of weakly holomorphic Hecke eigenforms is perhaps not very enlightening at first glance.  If one simply defines ``eigenforms'' as elements of some quotient space, then one can replace $D^{2k-1}(S_{2-2k}^!)$ with any space $\mathcal{J}$ that is preserved under the action of the Hecke operators.  It is hence natural to ask why $D^{2k-1}(S_{2-2k}^!)$ is the correct subspace. There are a few answers to this question. The initial perspective taken in \cite{Guerzhoy} was a $p$-adic one, because for $f\in D^{2k-1}(S_{2-2k}^!)$ with integral coefficients the $(p^mn)$-th coefficients become divisible by higher powers of $p$ as $m$ gets larger. Elements of $D^{2k-1}(S_{2-2k}^!)$ have vanishing period polynomials \cite{BGKO} and there is a recent cohomological interpretation as a space of coboundaries announced by Funke. Moreover certain regularized critical $L$-values are zero for elements in this space (see \cite[Theorem 2.5]{BFK}).

In this paper, we give another reason for the choice $\mathcal{J}=D^{2k-1}(S_{2-2k}^!)$ by viewing these Hecke eigenforms in the framework of a regularized inner product $\langle~, ~\rangle$, defined in \cite{BDE} for arbitrary weakly holomorphic modular forms.
Note that if $f\in M_{2k}$ is a classical holomorphic Hecke eigenform, then for each $m\in\N$ there exists $\lambda_m\in \C$, for which
\begin{equation}\label{eqn:fTpinner}
	\left<f|T_m, g\right> =\lambda_m\!\left<f,g\right>
\end{equation}
for all $g\in M_{2k}$.  Indeed, since the inner product is non-degenerate on $M_{2k}$ (see \cite[Section 5]{ZagierNotRapid}), (\ref{eqn:fTpinner}) is equivalent to the usual definition of Hecke eigenforms on $M_{2k}$.

One may hence treat (1.2) as an alternative definition of Hecke eigenforms. Both from this perspective and from the perspective of investigating the inner product, it is very natural to factor out by the space 
\begin{equation}\label{eqn:orth}
M_{2k}^{!,\perp}:=\left\{f\in M_{2k}^!: \left<f,g\right>=0\ \text{ for all } g\in M_{2k}^!\right\}
\end{equation}
of weakly holomorphic forms which are orthogonal to all of $M_{2k}^!$. 

The main point of this paper is to give another motivation of the definition in \cite{Guerzhoy} by proving that the space $D^{2k-1}(S_{2-2k}^!)$ is the ``degenerate part'' of $M_{2k}^!$ in the sense that it is orthogonal to all of $M_{2k}^!$, from which one also conversely concludes that $M_{2k}^{!,\perp}$ is a very large subspace.
\begin{theorem}\label{thm:degenerate}
A function $f\in M_{2k}^!$ is in $M_{2k}^{!, \perp}$ if and only if $f\in D^{2k-1}(S_{2-2k}^!)$.
\end{theorem}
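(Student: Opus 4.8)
The plan is to show that $D^{2k-1}(S_{2-2k}^!)$ is exactly the radical of the regularized inner product on $M_{2k}^!$, proving the two inclusions separately. The common engine is integration by parts against a preimage under $D^{2k-1}$: by \cite{BOR} every $f\in M_{2k}^!$ can be written as $f=D^{2k-1}(\mathcal{F})$ for a harmonic Maass form $\mathcal{F}$ of weight $2-2k$, and Bol's identity lets me replace $D^{2k-1}$ by an iterated Maass raising operator up to an explicit constant. I would run the whole computation on the truncated fundamental domain $\mathcal{F}_T=\{z\in\mathcal{F}:\im(z)\le T\}$ and only at the end pass to the regularized value by isolating the term that is constant in $T$, matching the definition of $\langle\,,\rangle$ from \cite{BDE}.

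For the inclusion $D^{2k-1}(S_{2-2k}^!)\subseteq M_{2k}^{!,\perp}$, take $f=D^{2k-1}(\phi)$ with $\phi\in S_{2-2k}^!$ and let $g\in M_{2k}^!$ be arbitrary. After rewriting $D^{2k-1}\phi$ through Bol's identity, a single application of Stokes' theorem moves the outermost raising operator off $\phi$ and converts it into the Maass lowering operator $L:=-2iy^2\frac{\partial}{\partial\bar z}$ acting on $g$; since $g$ is holomorphic, $Lg=0$, so the entire bulk integral over $\mathcal{F}_T$ vanishes. What survives is a boundary term, and on $\SL_2(\Z)$ the vertical sides cancel by modular invariance and the arcs cancel in $\gamma$-pairs, leaving only the horocycle at height $T$. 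There the integral $\int_0^1\cdots\,dx$ extracts a zeroth Fourier coefficient, whose only part that is constant in $T$ (decaying terms vanish as $T\to\infty$, exponentially growing terms are discarded by the regularization) is proportional to the product of the constant terms of $\phi$ and $g$. Since $\phi\in S_{2-2k}^!$ has vanishing constant term this is $0$; I note that this same step explains why $S_{2-2k}^!$ and not $M_{2-2k}^!$ is forced, as a $\phi$ with nonzero constant term gives $\langle D^{2k-1}\phi,E_{2k}\rangle\neq0$.

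For the reverse inclusion I would run the identical computation for a general $f=D^{2k-1}(\mathcal{F})$, where now $\mathcal{F}$ has a non-holomorphic part encoded by the shadow $\xi_{2-2k}(\mathcal{F})\in S_{2k}$. The bulk still vanishes because $g$ is holomorphic, and the regularized boundary term produces an explicit finite bilinear pairing of the principal part, constant term, and period data of $f$ against the Fourier coefficients of $g$. The task is then to show this pairing is non-degenerate modulo $D^{2k-1}(S_{2-2k}^!)$: assuming $\langle f,g\rangle=0$ for all $g$, I would let $g$ range over a spanning family (the Eisenstein series, a basis of cusp forms, and the canonical weakly holomorphic basis of $M_{2k}^!$ normalized by its principal parts) and read off coefficient by coefficient that both the constant term and the full period polynomial of $f$ must vanish. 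By the characterization of the image of $D^{2k-1}$ in \cite{BGKO}, vanishing of the period polynomial together with $c_f(0)=0$ is equivalent to $f\in D^{2k-1}(S_{2-2k}^!)$, completing the converse.

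The main obstacle is this converse: one must exhibit enough test forms $g$ to detect every coefficient and period that $\langle f,\cdot\rangle$ is capable of seeing, and then prove that the resulting Gram pairing on the finite-dimensional quotient $M_{2k}^!/D^{2k-1}(S_{2-2k}^!)$ is non-degenerate. A secondary technical difficulty, present in both directions, is making the regularization rigorous, namely justifying Stokes' theorem on $\mathcal{F}_T$ for forms with exponential growth at the cusp and verifying that the prescription of \cite{BDE} isolates the constant-in-$T$ part and discards the growing contributions in a way consistent with the boundary computation.
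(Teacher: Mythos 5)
Your plan is a genuinely different route from the paper's, and unfortunately the places where you defer the work are exactly where the content of the theorem lies. The paper never runs a Stokes computation: it quotes \cite[Corollary 4.5]{BDE}, which already identifies $M_{2k}^{!,\perp}$ as the set of $\xi_{2-2k}(F)$ with $F\in H_{2-2k}^{\operatorname{mg}}$ and $F^+=0$, and then translates this into the $D^{2k-1}$-picture by pure algebra with the flipping operator $\mathfrak{F}_{2-2k}$ (using $\xi_{2-2k}\circ\mathfrak{F}_{2-2k}=\tfrac{(4\pi)^{2k-1}}{(2k-2)!}D^{2k-1}$, $D^{2k-1}\circ\mathfrak{F}_{2-2k}=\tfrac{(2k-2)!}{(4\pi)^{2k-1}}\xi_{2-2k}$, and the fact that the flip sends $c_F^+(0)$ to $-\overline{c_F^+(0)}$). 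You instead propose to reprove that analytic input from scratch, and your description of the key analytic step is not correct. The inner product of \cite{BDE} is \emph{not} ``the constant-in-$T$ part of $\int_{\mathcal{F}_T}$ with exponentially growing terms discarded'': it is $\operatorname{CT}_{s=0}I_\varphi(f,g;0,s)-i\sum_{n>0}c_f(-n)\overline{c_g(-n)}\operatorname{Im}\left(E_{2-2k,\varphi}(-4\pi n)\right)$, defined by analytic continuation in the auxiliary variables $(w,s)$ plus a correction term that is essential for Hermitianness and $\varphi$-independence. Under that continuation the growing terms in your horocycle integral contribute finite, generally nonzero values (generalized exponential integrals at negative arguments), not zero; showing that these contributions combine with the correction term to give what your naive prescription predicts is precisely the hard part, and is the content of the results of \cite{BDE} that the paper cites. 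Note also that inserting the convergence factor $y^{-s}e^{-wy}$ destroys the modular invariance on which your cancellation of the vertical sides and circular arcs depends, so Stokes' theorem cannot be applied verbatim to the regularized integrand; this is not a ``secondary'' difficulty but the main one, in both directions of your argument.

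The converse direction has a further, independent gap: ``show the resulting Gram pairing is non-degenerate modulo $D^{2k-1}(S_{2-2k}^!)$ by letting $g$ range over a spanning family'' is a restatement of what must be proved, not a proof, and no mechanism is offered for extracting the period polynomial of $f$ from the values $\langle f,g\rangle$. There is also a concrete error in your setup: for general $f\in M_{2k}^!$ you take a preimage $f=D^{2k-1}(\mathcal{F})$ with shadow $\xi_{2-2k}(\mathcal{F})\in S_{2k}$, but the Fourier expansion formula for $D^{2k-1}$ shows that the constant term of $f$ equals $-\tfrac{(2k-1)!}{(4\pi)^{2k-1}}c_{\mathcal{F}}^-(0)$, so whenever $c_f(0)\neq 0$ the shadow has nonzero constant term and cannot be a cusp form. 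Finally, the characterization you want to cite from \cite{BGKO} (vanishing period polynomial together with $c_f(0)=0$ being equivalent to $f\in D^{2k-1}(S_{2-2k}^!)$) would need to be stated and sourced precisely, since the quotient $M_{2k}^!/D^{2k-1}(S_{2-2k}^!)\cong M_{2k}\oplus M_{2k}$ is detected by more data than the period polynomial alone. In short: your first inclusion is a plausible sketch whose regularization bookkeeping is unjustified, and your second inclusion is a program rather than an argument; the paper's proof avoids both issues by reducing to \cite[Corollary 4.5]{BDE} and the involution $\mathfrak{F}_{2-2k}$.
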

As alluded to above, Theorem \ref{thm:degenerate} yields another characterization of weakly holomorphic Hecke eigenforms.
\begin{corollary}\label{cor:Heckeequiv}
A function $f\in M_{2k}^!$ is a weakly holomorphic Hecke eigenform if and only if for every $m\in\N$ there exists $\lambda_m$ satisfying \eqref{eqn:fTpinner} for all $g\in M_{2k}^!$.
\end{corollary}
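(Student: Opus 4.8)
The plan is to deduce Corollary \ref{cor:Heckeequiv} directly from Theorem \ref{thm:degenerate} by expressing both versions of the eigenvalue relation through the degenerate subspace $M_{2k}^{!,\perp}$. The starting observation is that, for a fixed $m$ and a fixed candidate eigenvalue $\lambda_m$, the identity \eqref{eqn:fTpinner} holding for every $g\in M_{2k}^!$ is equivalent, by linearity of $\langle~,~\rangle$ in its first argument (and using that the Hecke eigenvalues $\lambda_m$ are real, so that the conjugation convention on the pairing is immaterial), to
\[
\left\langle f|T_m-\lambda_m f,\,g\right\rangle=0\qquad\text{for all }g\in M_{2k}^!.
\]
By the definition \eqref{eqn:orth} of $M_{2k}^{!,\perp}$, this says precisely that $f|T_m-\lambda_m f\in M_{2k}^{!,\perp}$.

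Next I would invoke Theorem \ref{thm:degenerate}, which gives the identification $M_{2k}^{!,\perp}=D^{2k-1}(S_{2-2k}^!)$. Feeding this into the membership statement above converts it into $f|T_m-\lambda_m f\in D^{2k-1}(S_{2-2k}^!)$, which is exactly the congruence \eqref{eqn:fTp}. Hence, for each $m$ and with one and the same constant $\lambda_m$, the relation \eqref{eqn:fTpinner} for all $g$ is equivalent to \eqref{eqn:fTp}; ranging over all $m\in\N$ then shows that the full system \eqref{eqn:fTpinner} is equivalent to the defining congruence system of a weakly holomorphic Hecke eigenform.

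The only point that requires attention is the non-degeneracy built into the notion of an eigenform, namely the exclusion $f\notin D^{2k-1}(S_{2-2k}^!)$ that removes the trivial class $[0]$. Under Theorem \ref{thm:degenerate} this is identical to the requirement $f\notin M_{2k}^{!,\perp}$, i.e. that $\langle f,g\rangle\neq 0$ for at least one $g\in M_{2k}^!$, which is exactly the condition that $[f]$ be a nonzero eigenvector for $\langle~,~\rangle$ in the quotient $M_{2k}^!/M_{2k}^{!,\perp}=M_{2k}^!/D^{2k-1}(S_{2-2k}^!)$. I would therefore state at the outset that in both formulations the eigenform is taken to be nonzero modulo the degenerate subspace, so that this exclusion matches on the two sides. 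With that convention fixed there is no real obstacle remaining: the Corollary is a formal consequence of Theorem \ref{thm:degenerate}, and the only mildly delicate bookkeeping is to check that a single eigenvalue $\lambda_m$ can be used simultaneously in \eqref{eqn:fTp} and in \eqref{eqn:fTpinner}, which is immediate since the passage between the two conditions does not alter $\lambda_m$.
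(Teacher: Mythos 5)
Your proposal is correct and takes essentially the same route as the paper: both arguments combine Theorem \ref{thm:degenerate} with linearity of the regularized inner product in its first slot, so that \eqref{eqn:fTpinner} holding for all $g\in M_{2k}^!$ is equivalent to $f|T_m-\lambda_m f\in M_{2k}^{!,\perp}=D^{2k-1}\!\left(S_{2-2k}^!\right)$, i.e.\ to \eqref{eqn:fTp}, with the same $\lambda_m$ throughout. One minor remark: your parenthetical appeal to the reality of the eigenvalues $\lambda_m$ is unnecessary (and not justified in general for weakly holomorphic eigenforms), since $\lambda_m$ multiplies the first, linear, argument of the pairing, so no complex conjugation ever enters.
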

\begin{remarks}
\noindent

\noindent
\begin{enumerate}[leftmargin=*, align=left, label={\rm(\arabic*)}]
\item
As shown in \cite[Theorem 1.3]{BDE}, the inner product is Hermitian on $M_{2k}^!$. Hence if $f$ is a weakly holomorphic Hecke eigenform, then \eqref{eqn:fTpinner} implies that for all $g\in M_{2k}^!$
\[
\left<g,f|T_m\right> = \overline{\lambda_{m}}\left<g,f\right>.
\]
Moreover, by Lemma \ref{lem:TpHermitian} below, the Hecke operators are also Hermitian. Thus, taking $g=f$, we see that $\lambda_m\in \R\text{ or } \left<f,f\right>=0$.
\item 
The regularized inner product plays an important role in both mathematics and physics, due to the fact that Borcherds used this regularization to construct theta lifts of weakly holomorphic modular forms on orthogonal groups. For example, such lifts recently occurred in Angelantonj, Florakis, and Pioline's investigation \cite{PiolineOneLoop} of BPS amplitudes with momentum insertions (see \cite[(3.23)]{PiolineOneLoop}). The same authors evaluate higher-dimensional analogues of these lifts in their continued study of one-loop integrals in \cite[Section 2.2]{PiolineThreshold}.

\rm
\item
The regularized inner product that Borcherds used was based on a regularization by physicists Harvey and Moore \cite{HM} and the regularization considered in this paper agrees with Harvey and Moore's regularization whenever theirs exists. Moreover, weakly holomorphic modular forms and mock modular forms appear in moonshine (such as the $j$-function occurring in McKay--Thompson's Monstrous Moonshine) and the theory of quantum black holes (see \cite{DMZ}). Due to these connections, it may be of interest to physicists to understand the structure of the inner product better; specifically, we discuss next whether the space of such functions has some kind of Hilbert space structure. As seen in Theorem \ref{thm:degenerate}, $M_{2k}^!$ contains a large degenerate subspace $D^{2k-1}(S_{2-2k}^!)$, and thus the space is itself not a Hilbert space. However, combining Theorem \ref{thm:degenerate} with \cite[Theorem 1.2]{BGKO}, there is an isomorphism 
\begin{equation*}
M_{2k}^!\Big/M_{2k}^{!,\perp} = M_{2k}^!\Big/D^{2k-1}\!\left(S_{2-2k}^!\right) \cong M_{2k}\oplus M_{2k}.
\end{equation*}
In particular, a natural subspace is isomorphic to two copies of $S_{2k}$. Since $S_{2k}$ is a finite-dimensional Hilbert space under the classical Petersson inner product, there is a naturally occurring Hilbert space of twice the dimension of $S_{2k}$, and the weakly holomorphic Hecke eigenforms form an orthogonal basis of this space. It may hence be natural to investigate physical properties arising from weakly holomorphic modular forms occurring within this subspace.

\end{enumerate}
\end{remarks}

 The paper is organized as follows. In Section \ref{sec:prelim}, we introduce a family of non-holomorphic modular forms known as harmonic Maass forms which play an important role in the proof of Theorem \ref{thm:degenerate} and then
we explain how one can define regularized inner products.  In Section \ref{sec:weakly}, we prove Theorem \ref{thm:degenerate} and Corollary \ref{cor:Heckeequiv}.
\section{Preliminaries}\label{sec:prelim}
\subsection{Harmonic Maass forms}
We begin by defining harmonic Maass forms, which were first introduced by Bruinier--Funke \cite{BF}.
\begin{definition}
For $\kappa\in\mathbb{Z}$, a {\it harmonic Maass form of weight $2\kappa$} is a function $F:\mathbb{H}\to\mathbb{C}$, which is real analytic on $\H$ and satisfies the following conditions:
\begin{enumerate}[leftmargin=*, align=left, label={\rm(\arabic*)}]
\item For every $M=\left(\begin{smallmatrix} a&b\\c&d\end{smallmatrix}\right)\in\operatorname{SL}_2(\mathbb{Z})$, we have
\[
F\left(\frac{az+b}{cz+d}\right)=(cz+d)^{2\kappa}F(z).
\]
\item We have $\Delta_{2\kappa}(F)=0$, where $\Delta_{2\kappa}$ is the weight $2\kappa$ {\it hyperbolic Laplace operator} $(z=x+iy)$
$$
\Delta_{2\kappa}:=-y^2\left(\frac{\partial^2}{\partial x^2}+\frac{\partial^2}{\partial y^2}\right)+2\kappa iy\left(\frac{\partial}{\partial x}+i\frac{\partial}{\partial y}\right).
$$
\item The function $F$ grows at most linear exponentially at $i\infty$.
\end{enumerate}
\end{definition}
A weight $2\kappa$ harmonic Maass form $F$ has a Fourier expansion of the type ($c_F(n,y)\in\C$)
\begin{equation}\label{eqn:FourierGeneral}
F(z)=\sum_{n\in\Z} c_F(n,y)e^{2\pi i nz}.
\end{equation}
If $F$ is weakly holomorphic, then $c_F(n,y)=c_F(n)\in\C$ is independent of $y$.
 More generally, one may use condition (2) above to determine the dependence on $y$ and conclude that \eqref{eqn:FourierGeneral} naturally decomposes into holomorphic and non-holomorphic parts.  Namely, for a harmonic Maass form $F$ of weight $2-2k<0$,
\begin{equation}\label{eqn:Fourier}
F(z)=F^+(z)+F^-(z)
\end{equation}
where, for some (unique) $c_F^\pm(n)\in\mathbb{C}$, we have
\begin{align*}
F^+(z)&=\sum_{n\gg -\infty}c^+_F(n)e^{2\pi i nz},\\
F^-(z)&=c^-_F(0)y^{2k-1}+\sum_{\substack{n\ll\infty \\ n\neq 0}}c^-_F(n)\Gamma(2k-1, -4\pi ny)e^{2\pi i nz}.
\end{align*}
Here the \begin{it}incomplete gamma function\end{it} is given by $\Gamma(\alpha, w):=\int_{w}^{\infty}e^{-t}t^{\alpha-1}dt$ (for $\re(\alpha)>0$ and $w\in\C$).  We call $F^+$ its {\it holomorphic part} of $F$ and $F^-$ the {\it non-holomorphic part}.

We let $H_{2\kappa}^{\operatorname{mg}}$ be the space of weight $2\kappa$ harmonic Maass forms.   The operators $\xi_{2-2k}:=2iy^{2-2k} \overline{\frac{\partial}{\partial \overline{z}}}$ and $D^{2k-1}$, defined in the introduction, map $H_{2-2k}^{\operatorname{mg}}$ to $M_{2k}^!$, and we let $H_{2-2k}$ be the subspace of $H_{2-2k}^{\operatorname{mg}}$ consisting of forms which map to cusp forms under $\xi_{2-2k}$.  These differential operators also act naturally on the Fourier expansion \eqref{eqn:Fourier}.  In particular, as collected in \cite[Theorems 5.5 and 5.9]{Book}, 
 \begin{align*}
\xi_{2-2k}(F(z))&= (2k-1)\overline{c_F^-(0)}-(4\pi)^{2k-1}\sum_{\substack{n\gg -\infty\\ n\neq 0}} n^{2k-1}\overline{c_{F}^-(-n)} e^{2\pi i nz},\\
D^{2k-1}(F(z))&=-\frac{(2k-1)!}{(4\pi)^{2k-1}}c_F^-(0)+ \sum_{n\gg -\infty} n^{2k-1} c_F^+(n) e^{2\pi i nz}.
\end{align*}

Recall that the operators $D^{2k-1}$ and $\xi_{2-2k}$ are Hecke-equivariant (cf. \cite[(7.4) and (7.5)]{Book}), i.e., for any harmonic Maass form $F$
\begin{align}
\label{eqn:xiHeckeEquiv}
\xi_{2-2k}\left(F|_{
2-2k
}T_m\right) &=m^{1-2k} \xi_{2-2k}(F)|_{
2k
}T_m,\\
\label{eqn:DHeckeEquiv}
D^{2k-1}\left(F|_{2-2k}T_m\right) &=m^{1-2k}D^{2k-1}(F)|_{2k}T_m.
\end{align}
Here for $\kappa\in\Z$, the Hecke operators $T_m:H_{2\kappa}^{\textnormal{mg}}\rightarrow H_{2\kappa}^{\textnormal{mg}}$ are given on the expansion \eqref{eqn:FourierGeneral} by
\begin{equation}\label{eqn:Tpdef}
F(z)|_{2\kappa}T_m:=\sum_{n\in\Z} \sum_{d\mid (m,n)} d^{2\kappa-1} c_F\!\left(\frac{mn}{d^2},\frac{d^2 y}{m}\right)e^{2\pi i nz}.
\end{equation}

\subsection{Regularized inner products}\label{sec:reginner}
For two cusp forms $f,g\in S_{2k}$, Petersson's classical inner product is defined by
\begin{equation}\label{classical}
\left<f,g\right>:=\int_{\SL_2(\Z)\backslash\H} f(z)\overline{g(z)} y^{2k} \frac{dxdy}{y^2}.
\end{equation}
This was extended to an inner product over all of $M_{2k}^!$ in a series of steps.  The first such attempt to do so appears to be by Petersson himself \cite{Pe2}, which was later rediscovered and extended by Harvey--Moore \cite{HM} and Borcherds \cite{Borcherds} as we describe below.  Setting
\[
\mathcal{F}_{T}:=\left\{ z\in \H: |z|\geq 1,\ y\leq T,\ -\frac{1}{2}\leq x\leq \frac{1}{2}\right\},
\]
the first regularized inner product is defined by
\begin{equation}\label{eqn:innerreg1}
\left<f,g\right>:=\lim_{T\to\infty}\int_{\mathcal{F}_T} f(z)\overline{g(z)} y^{2k} \frac{dx dy}{y^2}.
\end{equation}
Borcherds extended this regularization by multiplying the integrand by $y^{-s}$ and taking the constant term of the Laurent expansion around $s=0$ of the analytic continuation in $s$. The regularization \eqref{eqn:innerreg1} coincides with \eqref{classical} whenever \eqref{classical} converges. Unfortunately, the regularization \eqref{eqn:innerreg1} and even Borcherds's extension also do not always exist.  In particular, one cannot use them to define a meaningful norm for weakly holomorphic forms.
 This problem was overcome to obtain a regularized inner product for arbitrary $f,g\in M_{2k}^!$
by Diamantis, Ehlen, and the first author \cite{BDE}.  The idea is simple. One multiplies the integrands with a function that forces convergence and then analytically continues.

For this, observe that for $\operatorname{Re}(w)\gg0$, the integral
	$$
	I(f,g;w,s):=\int_{\mathcal{F}} f(z)\overline{g(z)} y^{2k-s} e^{-wy} \frac{dxdy}{y^2}
	$$
converges and is analytic, 
while formally plugging in $w=0=s$ yields Petersson's classical inner product \eqref{classical}. 
For every $\varphi\in(\pi/2,3\pi/2)\setminus\{\pi\}$ it has an analytic continuation $I_\varphi(f,g;w,s)$ to $U_\varphi\times\mathbb{C}$ with $U_\varphi\subset\mathbb{C}$ a certain open set 
containing $0$.
 Then define
	$$
	\langle f,g \rangle_\varphi :=\operatorname{CT}_{s=0} I_\varphi(f,g;0,s) -i\sum_{n>0} c_f(-n)\overline{c_g(-n)}\operatorname{Im}\left(E_{2-2k,\varphi}(-4\pi n)\right),
	$$
where $\operatorname{CT}_{s=0}F(s)$ denotes the constant term of an analytic function $F$, $c_F$ are defined as in \eqref{eqn:FourierGeneral}, and $E_{r, \varphi}$ is the generalized exponential integral (see \cite[(8.19.3)]{NIST}) defined with branch cut on the ray $\{xe^{i\varphi}: x\in\mathbb{R}^+\}$. In \cite{BDE} it is shown that for $f,g\in M_{2k}^!$, $\langle f,g\rangle$ exists and is independent of the choice of $\varphi$. Furthermore, it equals the regularization \eqref{eqn:innerreg1} whenever \eqref{eqn:innerreg1} exists.

The following lemma describes that the inner product from \cite{BDE} commutes with the Hecke algebra.
\begin{lemma}\label{lem:TpHermitian}
For $k\in\N$, the Hecke operators are Hermitian on $M_{2k}^!$.
\end{lemma}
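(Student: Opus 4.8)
The plan is to adapt the classical proof that $T_m$ is self-adjoint for the Petersson inner product, carrying the computation out inside the regularizing integral $I(f,g;w,s)$ and then controlling the factors that the regularization introduces. Recall the geometric form of the Hecke operator: writing $\Gamma:=\SL_2(\Z)$ and letting $\Delta_m$ be the set of integral $2\times 2$ matrices of determinant $m$, one may choose representatives for the cosets $\Gamma\backslash\Delta_m$ and rewrite \eqref{eqn:Tpdef} as $f|_{2k}T_m=m^{2k-1}\sum_{\alpha\in\Gamma\backslash\Delta_m}f|_{2k}\alpha$. The classical self-adjointness rests on two facts: the anti-involution $\alpha\mapsto\alpha^{\iota}:=\det(\alpha)\,\alpha^{-1}$ stabilizes $\Delta_m$ and fixes the relevant double coset, and the measure $\frac{dx\,dy}{y^2}$ together with the weight-$2k$ transformation law of $y^{2k}f\overline{g}$ make the integrand $\operatorname{GL}_2^+(\R)$-invariant. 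Substituting $z\mapsto\alpha^{-1}z$ in each summand moves the slash from $f$ onto $g$, and reindexing the resulting matrices by $\iota$ (which permutes the cosets) formally produces $g|_{2k}T_m$, giving $\langle f|T_m,g\rangle=\langle f,g|T_m\rangle$.

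To make this rigorous for the regularized product I would fix $\varphi$ and first work in the half-plane $\operatorname{Re}(w)\gg 0$. There, since $f$ and $g$ grow at most linearly exponentially at $i\infty$, the integral $I(f|T_m,g;w,s)$ converges absolutely, so the finite sum over $\alpha$, the integration, and the change of variables $z\mapsto\alpha^{-1}z$ may all be interchanged freely. Carrying out the substitution returns the integrand of $I(f,g|T_m;w,s)$ multiplied by a single non-invariant factor coming from the regularizer $y^{-s}e^{-wy}$: using $\Im(\alpha^{-1}z)=\tfrac{m\,\Im(z)}{|a-cz|^{2}}$ one sees that this factor depends on $\alpha$ but reduces identically to $1$ at $s=0$ and $w=0$. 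One then analytically continues both $I_\varphi(f|T_m,g;w,s)$ and $I_\varphi(f,g|T_m;w,s)$ to $U_\varphi\times\C$ and compares constant terms at $s=0$.

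The main obstacle is exactly this non-invariance of $y^{-s}e^{-wy}$ under $\operatorname{GL}_2^+(\R)$: the change of variables does not reproduce the integrand of $I(f,g|T_m;w,s)$ on the nose, and although the discrepancy tends to $1$ pointwise as $(s,w)\to(0,0)$, it can still feed into the constant term at $s=0$ through the behaviour of the integral near the cusp $i\infty$ (where the regularizer is active). The heart of the proof is therefore to show that the difference $\operatorname{CT}_{s=0}I_\varphi(f|T_m,g;0,s)-\operatorname{CT}_{s=0}I_\varphi(f,g|T_m;0,s)$ is cancelled precisely by the difference of the principal-part correction terms $-i\sum_{n>0}c_\bullet(-n)\overline{c_\bullet(-n)}\,\im\!\left(E_{2-2k,\varphi}(-4\pi n)\right)$ attached to $\langle f|T_m,g\rangle$ and $\langle f,g|T_m\rangle$. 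Since the Hecke action on principal parts is $c_{f|T_m}(-n)=\sum_{d\mid(m,n)}d^{2k-1}c_f\!\left(-mn/d^2\right)$, I expect these boundary contributions to match the correction terms coefficient-by-coefficient; verifying this matching, while tracking the generalized exponential integral and its branch cut along $\{xe^{i\varphi}:x\in\R^+\}$, is the one genuinely delicate calculation. As a consistency check, when $f$ and $g$ (and their Hecke transforms) lie in the range where \eqref{eqn:innerreg1} already converges, the regularizer is unnecessary, the discrepancy and the correction terms both vanish, and the identity reduces to the classical self-adjointness recalled above.
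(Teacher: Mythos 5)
Your outline stalls exactly at the step you yourself call ``the one genuinely delicate calculation,'' and that step is not a postponed verification --- it is the entire content of the lemma. Moreover, the cancellation cannot happen in the coefficient-by-coefficient way you predict. Writing $m=de$ and $n=dn_1$, the correction sum attached to $\left<f|T_m,g\right>$ consists of the terms $d^{2k-1}c_f(-en_1)\overline{c_g(-dn_1)}\,\im\!\left(E_{2-2k,\varphi}(-4\pi dn_1)\right)$, while the one attached to $\left<f,g|T_m\right>$ consists (after swapping the roles of $d$ and $e$) of the terms $e^{2k-1}c_f(-en_1)\overline{c_g(-dn_1)}\,\im\!\left(E_{2-2k,\varphi}(-4\pi en_1)\right)$: the same coefficient products occur, but with different weights and with the exponential integral evaluated at different arguments. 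So the two correction sums differ by a genuinely nonzero quantity, which would have to be recovered from the constant terms of the analytically continued integrals --- precisely the analysis the proposal does not contain. There is also a problem before any continuation takes place: after the substitution $z\mapsto\alpha^{-1}z$ each summand becomes an integral over $\alpha\mathcal{F}$, and re-folding the union of these regions back into $\mathcal{F}$ requires the $\SL_2(\Z)$-invariance of the integrand, which the factor $y^{-s}e^{-wy}$ destroys; your proposal never explains how to re-fold. As written, the argument establishes the lemma only when \eqref{eqn:innerreg1} converges, i.e., essentially in the classical case you use as a consistency check.

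The missing idea is to avoid the integral representation altogether, which is what the paper does. For $f\in S_{2k}^!$ one chooses a harmonic Maass form $G$ with $D^{2k-1}(G)=g$ and uses \cite[Theorem 4.1]{BDE}, namely $\{f,G\}=\left<f,\xi_{2-2k}(G)\right>$, to convert the regularized inner product into the coefficient pairing $\{f,g\}_0$ of \cite{BGKO}; self-adjointness of $T_m$ for that pairing is an elementary finite identity among Fourier coefficients. The Hecke-equivariance relations \eqref{eqn:xiHeckeEquiv} and \eqref{eqn:DHeckeEquiv} transport this identity back to the inner product, and surjectivity of $\xi_{2-2k}$ onto $M_{2k}^!$ covers all second arguments. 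The case $f\in M_{2k}^!$, $g\in S_{2k}^!$ then follows from the Hermitian symmetry of the inner product (\cite[Theorem 1.3]{BDE}), and the remaining case $f=g=E_{2k}$ is immediate since $E_{2k}$ is an eigenform with real eigenvalues. In other words, all of the hard regularization analysis is quarantined inside results already proved in \cite{BDE}, whereas your route would force you to redo that analysis inside an unfolded, non-invariant integral.
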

\begin{proof}
We split the proof into three cases.  First we assume that $f\in S_{2k}^!$ and $g\in M_{2k}^!$, then we use the fact that the inner product is Hermitian (see \cite[Theorem 1.3]{BDE}) for the case that $f\in M_{2k}^!$ and $g\in S_{2k}^!$, and finally we consider $f=g=E_{2k}$; these cover all cases by linearity.

Following \cite[(1.15)]{BGKO}, one defines, for $f,g\in M_{2k}^!$,
\[
\{f,g\}_0:=\sum_{n\in\Z\backslash \{0\}} \frac{c_{f}(-n)c_g(n)}{n^{2k-1}}.
\]
The ratio $c_{g}(n)/n^{2k-1}$ can also be interpreted as the $n$th coefficient of the holomorphic part of any harmonic Maass form $G$ for which $D^{2k-1}(G)=g$. This yields a connection with the {\it Bruinier--Funke pairing} (cf. \cite{BF} for the original definition restricted to certain subspaces)
\begin{equation*}
\{f,G\}:=\sum_{n\in\Z} c_f(-n)c_{G}^+(n)
\end{equation*}
between $f\in M_{2k}^!$ and $G\in H_{2-2k}^{\operatorname{mg}}$.

We first assume that $f\in S_{2k}^!$ and $g\in M_{2k}^!$.  In this case
\begin{equation}\label{eqn:relateBF}
\{f,g\}_0=\{f,G\}.
\end{equation}
This is related to the inner product by \cite[Theorem 4.1]{BDE}; namely,

\begin{equation}\label{eqn:innerBF}
\{f,G\} = \left<f,\xi_{2-2k}(G)\right>.
\end{equation}
Setting  $h:=\xi_{2-2k}(G)$, we combine \eqref{eqn:innerBF} with \eqref{eqn:relateBF} and use the fact that the Hecke operators are Hermitian with respect to $\left<\cdot,\cdot\right>_0$ to obtain
\[
\left<f|_{2k}T_m,h\right> = \left\{f|_{2k} T_m,G\right\} = \left\{ f|_{2k}T_m,g\right\}_0 = \left\{f,g|_{2k}T_m\right\}_0.
\]
Next, by \eqref{eqn:DHeckeEquiv}, we have
\[
D^{2k-1}\!\left(G|_{2-2k}T_m\right) = m^{1-2k}D^{2k-1}(G)|_{2k}T_m =m^{1-2k}g|_{2k}T_m.
\]
Plugging this into \eqref{eqn:relateBF} and using \eqref{eqn:innerBF}, yields
\[
\left\{f,g|_{2k}T_m\right\}_0=m^{2k-1}\left\{ f,G|_{2-2k}T_m\right\} =m^{2k-1}\left<f, \xi_{2-2k}\left(G|_{2-2k}T_m\right)\right>.
\]
We finally use \eqref{eqn:xiHeckeEquiv} to obtain
\[
m^{2k-1}\left<f, \xi_{2-2k}\left(G|_{2-2k}T_m\right)\right>=\left<f, h|_{2k}T_m\right>.
\]
Altogether, we have therefore shown that
\begin{equation}\label{eqn:TpHermitian}
\left<f|_{2k}T_m,h\right>=\left<f,h|_{2k}T_m\right>.
\end{equation}
Since $\xi_{2-2k}$ is surjective on $M_{2k}^!$, we obtain the claim for arbitrary $f\in S_{2k}^!$ and $h\in M_{2k}^!$.

Next suppose that $f\in M_{2k}^!$ and $g\in S_{2k}^!$.  Since the inner product is Hermitian (see \cite[Theorem 1.3]{BDE}), by \eqref{eqn:TpHermitian} we have
\[
\left<f|_{2k}T_m,g\right>=\overline{\left<g,f|_{2k}T_m\right>} = \overline{\left<g|_{2k}T_m,f\right>} = \left<f,g|_{2k}T_m\right>.
\]
Finally, for $f=E_{2k}=g$, we obtain the result directly from the fact that both $f$ and $g$ are Hecke eigenforms with the same real eigenvalues.

\end{proof}

\section{Degeneracy and Proofs of Theorem \ref{thm:degenerate} and Corollary \ref{cor:Heckeequiv}}\label{sec:weakly}
In this section, we show that the space $M_{2k}^{!,\perp}$ defined in \eqref{eqn:orth} coincides with the space $D^{2k-1}(S_{2-2k}^!)$ which appears in the definition of Hecke eigenforms for weakly holomorphic modular forms.    In \cite[Corollary 4.5]{BDE}, the space $M_{2k}^{!,\perp}$, defined in \eqref{eqn:orth}, was explicitly determined.  Theorem \ref{thm:degenerate} rewrites this in a form which is useful for Hecke eigenforms.
In order to prove Theorem \ref{thm:degenerate}, we also require the following useful lemma about the \begin{it}flipping operator\end{it}
\begin{equation*}
\mathfrak{F}_\kappa\left(F(z)\right):=-\frac{y^{-\kappa}}{(-\kappa)!}\overline{R_{\kappa}^{-\kappa}\left(F(z)\right)},
\end{equation*}
where 
\[
R_{\kappa}^{\ell}:=R_{\kappa+2\ell-2}\circ \cdots\circ R_{\kappa}
\]
is the \begin{it}repeated raising operator\end{it} with the \begin{it}raising operator\end{it} defined by 
\[
R_{\kappa}:=2i\frac{\partial}{\partial z} + \frac{\kappa}{y}.
\]
The following lemma follows from the calculation in the proof of \cite[Theorem 1.1]{BOR} and \cite[Remark 7]{BOR}; it may be found in this form in \cite[Proposition 5.14]{Book}.
\begin{lemma}\label{lem:flip}
If $F\in H_{2-2k}^{\operatorname{mg}}$, then the operator $\mathfrak{F}_{2-2k}$ satisfies
\begin{align}\label{eqn:flipxi}
\xi_{2-2k}(\mathfrak{F}_{2-2k}(F))&=\frac{(4\pi)^{2k-1}}{(2k-2)!} D^{2k-1}(F),\\
\label{eqn:flipD}
D^{2k-1}(\mathfrak{F}_{2-2k}(F))&=\frac{(2k-2)!}{(4\pi)^{2k-1}}\xi_{2-2k}(F).
\end{align}

Furthermore, $\mathfrak{F}_{2-2k}\circ \mathfrak{F}_{2-2k}(F)=F$.
\end{lemma}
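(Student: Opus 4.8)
The plan is to verify the three identities of Lemma \ref{lem:flip} by direct computation on the Fourier expansion \eqref{eqn:Fourier}, using the explicit descriptions of $\xi_{2-2k}$, $D^{2k-1}$, and the raising operators on Fourier coefficients. First I would record how the repeated raising operator $R_{2-2k}^{2k-2}$ acts on each term of $F = F^+ + F^-$. Since $R_{\kappa} = 2i\frac{\partial}{\partial z} + \frac{\kappa}{y}$ lowers the $y$-dependence in a controlled way, the composition $R_{2-2k}^{2k-2}$ (of length $2k-2$) applied to a monomial $e^{2\pi i n z}$ in $F^+$ produces, up to an explicit constant, $n^{2k-2} e^{2\pi i n z}$, while applied to the incomplete-gamma terms of $F^-$ it collapses the non-holomorphic piece into an explicit polynomial-times-exponential expression. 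I expect the cleanest route is to cite or reproduce the standard fact that the raising operator relates to $\xi$ and $D$ via $R_{2-2k}^{2k-2}$ essentially interchanging the holomorphic and non-holomorphic parts; this is exactly the content of the calculation in \cite[Theorem 1.1]{BOR} that the lemma statement invokes.

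For \eqref{eqn:flipxi}, I would apply $\xi_{2-2k}$ to $\mathfrak{F}_{2-2k}(F) = -\frac{y^{-(2-2k)}}{(2k-2)!}\overline{R_{2-2k}^{2k-2}(F)}$ and track which part of $F$ survives. Because $\xi_{2-2k} = 2iy^{2-2k}\overline{\frac{\partial}{\partial\overline{z}}}$ annihilates holomorphic functions of $z$, after taking the complex conjugate inside $\mathfrak{F}_{2-2k}$ the operator $\xi_{2-2k}$ will pick out precisely the image of the holomorphic part $F^+$, and matching the Fourier coefficients against the stated formula for $D^{2k-1}(F)$ should yield the constant $\frac{(4\pi)^{2k-1}}{(2k-2)!}$. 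The identity \eqref{eqn:flipD} is the dual computation: applying $D^{2k-1}$ to $\mathfrak{F}_{2-2k}(F)$ conjugates the roles, so the non-holomorphic coefficients $c_F^-(n)$ emerge and one matches against the formula for $\xi_{2-2k}(F)$, giving the reciprocal constant $\frac{(2k-2)!}{(4\pi)^{2k-1}}$. Throughout, the key bookkeeping is the interplay of the two explicit coefficient formulas displayed just before \eqref{eqn:xiHeckeEquiv}, together with the conjugations built into both $\xi_{2-2k}$ and $\mathfrak{F}_{2-2k}$.

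The involutivity $\mathfrak{F}_{2-2k}\circ\mathfrak{F}_{2-2k}(F) = F$ is most naturally obtained by combining the first two identities rather than by a fresh raising computation: applying $\mathfrak{F}_{2-2k}$ twice and using that $\mathfrak{F}_{2-2k}$ sends a harmonic Maass form to another one whose $\xi$- and $D^{2k-1}$-images are governed by \eqref{eqn:flipxi} and \eqref{eqn:flipD}, one checks that the composition preserves both the holomorphic and non-holomorphic Fourier data of $F$. Since a harmonic Maass form is determined by its full Fourier expansion, matching both parts suffices. Alternatively, one verifies directly that $\mathfrak{F}_{2-2k}$ swaps $F^+$ and $F^-$ up to explicit reciprocal constants that cancel after a second application.

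The main obstacle I anticipate is the careful control of the repeated raising operator $R_{2-2k}^{2k-2}$ on the incomplete gamma terms in $F^-$: one must confirm that iterating $R_\kappa$ exactly $2k-2$ times converts $\Gamma(2k-1,-4\pi n y)e^{2\pi i n z}$ into a pure exponential (with the appropriate power of $n$ and $4\pi$), and that the constant-term piece $c_F^-(0)y^{2k-1}$ behaves correctly under the raising. Getting every factor of $4\pi$, $n$, and $(2k-2)!$ to line up is where the bulk of the effort lies, but since the lemma is quoted from \cite[Proposition 5.14]{Book} and \cite[Remark 7]{BOR}, I would invoke those references for the detailed coefficient computation and present only the structural argument connecting $\mathfrak{F}_{2-2k}$ to $\xi_{2-2k}$ and $D^{2k-1}$.
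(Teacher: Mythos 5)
Your proposal is correct and takes essentially the same route as the paper: the paper offers no independent proof of Lemma \ref{lem:flip}, but rather cites precisely the raising-operator computation on Fourier expansions from \cite[Theorem 1.1 and Remark 7]{BOR} and \cite[Proposition 5.14]{Book} that you outline and then ultimately invoke. One small caveat: deducing $\mathfrak{F}_{2-2k}\circ\mathfrak{F}_{2-2k}(F)=F$ from \eqref{eqn:flipxi} and \eqref{eqn:flipD} alone does not control the constant term $c_F^+(0)$ (which neither $\xi_{2-2k}$ nor $D^{2k-1}$ detects), so you need either your alternative direct verification of how the flip acts on coefficients (it sends $c_F^+(0)$ to $-\overline{c_F^+(0)}$, as the paper itself uses via \cite{BKR}) or the observation that the discrepancy is a constant, hence a weakly holomorphic form of negative weight $2-2k$, hence zero.
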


We now use Lemma \ref{lem:flip} to prove Theorem \ref{thm:degenerate}.
\begin{proof}[Proof of Theorem \ref{thm:degenerate}]

By \cite[Corollary 4.5]{BDE},  $f\in M_{2k}^!$ is orthogonal to $M_{2k}^!$ if and only if there exists $F\in H_{2-2k}^{\operatorname{mg}}
$ for which $F^+=0$ and $\xi_{2-2k}(F)=f$.  By \eqref{eqn:flipD}, we have
\begin{equation}\label{eqn:fDflip}
f=\xi_{2-2k}(F)=\frac{(4\pi)^{2k-1}}{(2k-2)!}D^{2k-1}\!\left(\mathfrak{F}_{2-2k}(F)\right).
\end{equation}
Note that $F^+=0$ is equivalent to $D^{2k-1}(F)=0$ and $c_{F}^+(0)=0$.  Since $D^{2k-1}(F)=0$, \eqref{eqn:flipxi} implies that
\[
\xi_{2-2k}\!\left(\mathfrak{F}_{2-2k}(F)\right)
=\frac{(4\pi)^{2k-1}}{(2k-2)!}
 D^{2k-1}(F) = 0.
\]
We conclude that $\mathfrak{F}_{2-2k}(F)\in M_{2-2k}^!$ because this is the kernel of $\xi_{2-2k}$.  Thus by \eqref{eqn:fDflip}, we have $f\in D^{2k-1}(M_{2-2k}^!)$.  Moreover, \cite[displayed formula before Lemma 3.1]{BKR} states that
(note that in this paper the flipping operator is renormalized)
\begin{equation*}
c_{\mathfrak{F}_{2-2k}(F)}^+(0)=-\overline{c_{F}^+(0)}=0.
\end{equation*}
Thus $F^+=0$ is equivalent to $f\in D^{2k-1}(S_{2-2k}^!)$, which is the claim.
\end{proof}

We conclude the paper by using Theorem \ref{thm:degenerate} to show that one may obtain an alternative characterization of weakly holomorphic Hecke eigenforms by requiring \eqref{eqn:fTpinner} to hold for all $g\in M_{2k}^!$ instead of \eqref{eqn:fTp}.
\begin{proof}[Proof of Corollary \ref{cor:Heckeequiv}]
By Theorem \ref{thm:degenerate}, \eqref{eqn:fTp} is equivalent to
\begin{equation*}
f|T_m\equiv \lambda_m f\pmod{M_{2k}^{!,\perp}}.
\end{equation*}
Hence $f$ is a weakly holomorphic Hecke eigenform if and only if for every $m$ there exists $\lambda_m\in \C$ such that $h:=f|T_m-\lambda_mf\in M_{2k}^{!,\perp}$. Linearity of the regularized inner product implies that for every $g\in M_{2k}^!$, 
\[
\left<f|T_m,g\right> = \left<\lambda_m f + h,g\right>=\lambda_m\left< f,g\right>+\left<h,g\right>, 
\]
which satisfies  \eqref{eqn:fTpinner} for every $g\in M_{2k}^!$ if and only if $h\in M_{2k}^{!,\perp}$.
\end{proof}

\end{document}